\documentclass[twoside]{amsart}
\usepackage{graphicx}
\usepackage{amssymb,amsfonts,amstext,amsmath,amsthm}
\newcommand{\ben}{\begin{enumerate}}
\newcommand{\een}{\end{enumerate}}
\newcommand{\disp}{\displaystyle}
\input{amssym.def}
\input{amssym.tex}
\baselineskip 24pt
\newtheorem{thm}{Theorem}[section]

\newtheorem{cor}[thm]{Corollary}
\theoremstyle{definition}
\newtheorem{definition}[thm]{Definition}
\begin{document}
\title[TAT labeling of Ladders, Prisms and Generalised graphs]{Totally Antimagic Total labeling of Ladders, Prisms and Generalised Pertersen graphs}
\author{D. O. A. Ajayi, A. D. Akwu}
\date{}
\subjclass[2000]{05C78}
\keywords{Graph labeling, Ladders, Chain graphs, prisms, Generalised Petersen graph}

\address{Deborah Olayide A. AJAYI \hfill\break\indent Department
of Mathematics, University of Ibadan, Ibadan, Nigeria}
\address{Abolape D. Akwu \hfill\break\indent Department of Mathematics, Federal University of Agriculture, Makurdi, Nigeria}
\email{adelaideajayi@yahoo.com, abolaopeyemi@yahoo.co.uk }

\begin{abstract}
Given a graph $G$, a total labeling on $G$ is called \emph{edge-antimagic total (respectively, vertex-antimagic total}) if all edge-weights (respectively, vertex-weights) are pairwise distinct. If a labeling on $G$ is simultaneously edge-antimagic total and vertex-antimagic total, it is called a \emph{totally antimagic total labeling}. A graph that admits totally antimagic total labeling is called a \emph{totally antimagic total graph}.
In this paper, we prove that ladders, prisms and generalised Pertersen graphs are totally antimagic total graphs. We also show that the chain graph of totally antimagic total graphs is a totally antimagic total graph.
\end{abstract}
\maketitle

\section{Introduction}
We consider finite, undirected graphs without loops and multiple edges. For a graph $G$, $V(G)$ and $E(G)$ denotes the vertex-set and the edge-set respectively. A $(p,q)$ graph $G$ is a graph such that $|V(G)|=p$ and $|E(G)|=q$. We refer the reader to \cite{WA} and \cite{WE} for all other terms and notation not provided in this paper.

A labeling of a graph $G$ is any mapping that sends some set of graph elements to a set of non-negative integers. If the domain is the vertex-set or the edge-set, the labelings are called \emph{vertex labeling or edge labeling} respectively. Moreover, if the domain is $V(G)\cup E(G)$ then the labeling is called \emph{total labeling}.

Let $f$ be a vertex labeling of a graph $G$, we define the edge-weight of $uv\in E(G)$ to be $wt_f(uv)=f(u)+f(v)$. If $f$ is a total labeling, then the edge-weight of $uv$ is $wt_f(uv)=f(u)+f(v)+f(uv)$. The vertex-weight of a vertex $v$, $v\in E(G)$ is defined by $$wt_f(v)=\sum_{u\in N(v)}f(uv)+f(v)$$
where $N(v)$ is the set of the neighbors of $V$. If the vertices are labeled with the smallest posssible numbers i.e. $f(V(G))=\{1,2,3,...,p\}$, then the total labeling is called \emph{super}.

A labeling $f$ is called \emph{edge-antimagic total(vertex-antimagic total)}, for short EAT(VAT), if all edge-weights (vertex-weights) are pairwise distinct. A graph that admits EAT (VAT) labeling is called an EAT (VAT) graph. If the edge-weights (vertex-weights) are all the same, then the total labeling is called \emph{edge-magic total (vertex-magic total)}. For an edge labeling, a \emph{vertex-antimagic edge} (VAE) labeling is a labeling whereby a vertex-weight is the sum of the labels of all edges incident with the vertex.

In 1990, Harsfield and Ringel \cite{HR} introduced the concept of an antimagic labeling of graphs whereby they conjectured that every tree except $P_2$ has a VAE labeling. This conjecture was proved to be true for all graphs having minimum degree $log|V(G)|$ by Alon \emph{et al} \cite{AK}.
If a VAE labeling satisfies the condition that the set of all the vertex-weights is $\{a,a+d,...,a+(p-1)d\}$ where $a>0$ and $d\geq 1$ are two fixed integers, then the labeling is called an \emph{(a,d)-VAE labeling}. For further results on graph labeling see \cite{BAM}, \cite{GA} and \cite{MD}.
In \cite{MPJ}, Miller \emph{et al} proved that all graphs are (super) EAT. They also proved that all graphs are (super )VAT.
If the labeling is simultaneously vertex-antimagic total and edge-antimagic total, then it is referred to as \emph{totally-antimagic total} (TAT) labeling and a graph that admits such labeling is a \emph{totally-antimagic total} (TAT) graph. The definition of totally antimagic total labeling is a natural extension of the concept of totally magic labeling. In \cite{BE1}, Baca \emph{et al} deals with totally antimagic total graphs. They found totally-antimagic total labeling of some classes of graphs and proved that paths, cycles, stars, double-stars and wheels are totally antimagic total.  Moreover, they showed that a union of regular totally antimagic total graphs is a totally antimagic total graph. Also, in \cite{AJ}, Akwu and Ajayi showed that complete bipartite graphs are totally antimagic total graphs.

In this paper, we deal with totally antimagic total labeling of ladders, prisms and generalised Petersen graphs. We also show that the chain graphs obtained by concatenation of totally antimagic total graphs are totally antimagic total graphs.

First we provide some definitions which are related to the present work.
\begin{definition}
\emph{Ladder} is a graph obtained by the cartesian product of path $P_n$ and path $P_2$ denoted by $L_n$, i.e. $L_n \simeq P_n\times P_2$ where $V(L_n)=\{u_iv_i:1\leq i\leq n\}$ and $E(L_n)=\{u_iu_{i+1},v_iv_{i+1}:1\leq i\leq n-1\}\cup \{u_iv_i:1\leq i\leq n\}$.
 \end{definition}
\begin{definition}
A labeling $g$ is ordered (sharp ordered) if $wt_g(u)\leq wt_g(v)$ $(wt_g(u)<wt_g(v))$ holds for every pair of vertices $u,v\in G$ such that $g(u)<g(v)$. A graph that admits a sharp ordered labeling is called a \emph{(sharp) ordered } graph. Also, if the vertex set can be partitioned into $n$ sets such that each set is sharp ordered, then the graph is  referred to as \emph{weak ordered} graph.
\end{definition}
\begin{definition}
The \emph{prism} graph can be defined as the cartesian product $C_n\times P_2$ of a cycle on $n$ vertices with a path of length $2$. The vertex set is $V(C_n\times P_2)=\{u_iv_i:1\leq i\leq n\}$ and the edge set is
$E(C_n\times P_2)=\{u_iu_{i+1},v_iv_{i+1}:1\leq i\leq n\}\cup \{u_iv_i:1\leq i\leq n\}$
where $i$ is calculated modulo $n$. The orders of the vertex set and the edge set are $2n$ and $3n$ respectively.
\end{definition}
\begin{definition}
The \emph{generalised Pertersen} graph  $P(n,m), \ n\geq 3$ and $1\leq m\leq \lfloor \frac{n-1}{2} \rfloor$ consists of an outer $n$-cycle $u_i,u_2,...,u_n$, a set of $n$ spokes $u_iv_i, 1\leq i\leq n$ and $n$ edges $v_iv_{i+m}$, $1\leq i\leq n$ with indices taken modulo $n$.
\end{definition}


\section{Totally antimagic total graphs}
In this section, we prove that ladders, prism graphs and generalised Petersen graphs are totally antimagic total graphs.
\begin{thm}
The ladder graph $L_n$, $n\geq 2$ is a weak ordered super TAT.
\end{thm}
\begin{proof}
We denote the vertices of $L_n$ by the following symbols $V(L_n)=\{u_iv_i:1\leq i\leq n\}$ such that $E(L_n)=\{u_iu_{i+1},v_iv_{i+1}:1\leq i\leq n-1\}\cup \{u_iv_i:1\leq i\leq n\}$. Consider the labeling $g$ of $L_n$ as follows:
$$g(u_i)=\left\{\begin{array}{ll}
2i-1, & 1\leq i\leq \lceil\frac{n}{2}\rceil\\ \ \\
2(n-i+1), & \lceil\frac{n}{2} \rceil+1\leq i\leq n
\end{array}\right.$$
and
$$g(v_i)=\left\{\begin{array}{ll}
n+2i-1, & 1\leq i\leq \lceil\frac{n}{2}\rceil\\ \ \\
3n+2(1-i), & \lceil\frac{n}{2} \rceil+1\leq i\leq n
\end{array}\right.$$
Also, the labeling of the edges is as follows:
$$g(u_iv_i)=\left\{\begin{array}{ll}
2(n+i)-1, & 1\leq i\leq \lceil\frac{n}{2} \rceil\\ \ \\
2(2n-i+1), & \lceil\frac{n}{2} \rceil+1\leq i\leq n-1
\end{array}\right.$$
$$g(u_iu_{i+1})=\left\{\begin{array}{ll}
3n+2i-1, & 1\leq i\leq \lfloor\frac{n}{2} \rfloor\\ \ \\
5n-2i, & \lfloor\frac{n}{2} \rfloor+1\leq i\leq n-1
\end{array}\right.$$
$$g(v_iv_{i+1})=\left\{\begin{array}{ll}
2(2n+i-1), & 1\leq i\leq \lfloor\frac{n}{2} \rfloor\\ \ \\
2(3n-i)-1, & \lfloor\frac{n}{2} \rfloor+1\leq i\leq n-1
\end{array}\right.$$
The above labeling is super. Next, we show that the vertex-weights are pairwise distinct.\\
The vertex-weights of $V(L_n)$ are
$$wt_g(u_i)=g(u_i)+\sum_{u\in N(u)}g(u_iu)$$ where vertex $u$ is any vertex adjacent to vertex $u_i$.
For $i=1$ and $i=n$, we have the weights $$wt_g(u_1)= 3+5n$$ and $$wt_g(u_n)=6+5n.$$
Also, for $i=2,...,n-1$, we have the weight $$wt_g(u_i)=\left\{\begin{array}{lll} \ \\
2(4i+4n-3), & 2\leq i\leq \lfloor\frac{n}{2} \rfloor\\ \ \\
12n-3, & i=\lfloor\frac{n}{2} \rfloor+1\\ \ \\
2(4(2n-i)+3), & \lfloor \frac{n}{2} \rfloor +2 \leq i\leq n-1
\end{array}\right.$$
Furthermore,
$$wt_g(v_i)=\left\{\begin{array}{ll}
W_1, \\ \ \\
W_2
\end{array}\right.$$
where $$W_1=2(2i-1)+k(2n+i-1)-(t+3n)$$ with the following conditions:

for $i=1, k=1$ and $t=0$,

for $2\leq i\leq \lfloor \frac{n}{2} \rfloor, k=t=2$.\\
While $$W_2=2(2(1-i)+k(3n-i))+(t-k)+7n$$ with the following conditions:

$k=t=2$ whenever $\lfloor \frac{n}{2}\rfloor +2\leq i\leq n-1$,

$k=1$ and $t=2$ for $i=n$,

for $n$ even and $i= \frac{n}{2} +1$, $k=2$ and $t=1$,\\

for $n$ odd and  $i=\lfloor \frac{n}{2}\rfloor +1$, $k=2$ and $t=-3$.

In view of the above labeling, the weights of all the vertices are different, that is the labeling is vertex-antimagic total.

Now, we show that the edge-weights are pairwise distinct. The edge-weight of the edges under labeling $g$ is as follows:
$$wt_g(u_iu_{i+1})=\left\{\begin{array}{ll}
W_3, \\ \ \\
W_4
\end{array}\right.$$
where $$W_3=3(2i+n-1)+t$$ with the following conditions:

$t=2$ for $1\leq i\leq \lceil \frac{n}{2}\rceil -1$,

$t=1$ for $n$ even and $i=\frac{n}{2}$,

$t=-2$ for $n$ odd and $i=\lceil  \frac{n}{2}\rceil$,

while $$W_4=3(3n-2i)+2$$ for $\lceil \frac{n}{2}\rceil +1\leq i\leq n-1$.
Also, $$wt_g(v_iv_{i+1})=\left\{\begin{array}{ll}
W_5, \\ \ \\
W_6
\end{array}\right.$$
where $$W_5=2(3(n+i)-2)+t$$ with the following conditions:

$t=2$ whenever $1\leq i\leq \lceil \frac{n}{2}\rceil -1$,

$t=1$ whenever $n$ is even and $i=\frac{n}{2}$,

$t=-2$ 	whenever $i=\lceil \frac{n}{2} \rceil $ and $n$ is odd.
Also,
$$W_6=3(2(2n-i)+1)-2, \lceil \frac{n}{2} \rceil +1\leq i\leq n-1$$
In view of the above labeling, the edge-weights are pair-wise distinct, which implies that the labeling is edge-antimagic labeling, i.e. ladders are edge-antimagic total graphs. \ \\ \ \\
If we partition the vertex set into two, i.e. $u_i$ and $v_i$, $1\leq i\leq n$, each vertex set is sharp ordered which implies that the graph is a weak ordered graph. Therefore ladders are super weak ordered TAT graphs since they are both edge-antimagic total graphs and vertex-antimagic total graphs.
\end{proof}
The following theorem shows that prism graphs are TAT graphs.
\begin{thm}
The prism graph $C_n\times P_2$ is a super TAT graph for every $n>2$.
\end{thm}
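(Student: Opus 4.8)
The plan is to exhibit an explicit total labeling $g$ of $C_n\times P_2$ that is super --- that is, one that assigns the labels $\{1,2,\dots,2n\}$ to the $2n$ vertices and the labels $\{2n+1,\dots,5n\}$ to the $3n$ edges --- and then to check by direct computation that the $2n$ vertex-weights are pairwise distinct and the $3n$ edge-weights are pairwise distinct. The construction will follow closely the one used for the ladder $L_n$ in the preceding theorem; the two new features are that the prism is $3$-regular, so there are no degree-$2$ corner vertices as in the ladder, and that the outer path $u_1\cdots u_n$ and the inner path $v_1\cdots v_n$ are each closed into a cycle by the wraparound edges $u_nu_1$ and $v_nv_1$.

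First I would define $g$ on the vertices by a zigzag scheme: the smaller half of the labels is placed on the outer vertices $u_i$ and the larger half on the inner vertices $v_i$, each family split at $\lceil\frac{n}{2}\rceil$ into an increasing block and a decreasing block, exactly as for $L_n$, so that the labels first rise and then fall as one travels around each cycle. The three edge families --- the outer edges $u_iu_{i+1}$, the inner edges $v_iv_{i+1}$, and the spokes $u_iv_i$ --- would each receive a consecutive band of the labels $\{2n+1,\dots,5n\}$, again arranged as an increasing block followed by a decreasing block, with all indices read modulo $n$ so that the wraparound edges are covered. Verifying that this assignment is a bijection onto $\{1,\dots,5n\}$, and hence that $g$ is super, is a routine bookkeeping step.

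Next I would compute the two kinds of weights. Since every vertex has degree $3$, each weight has the uniform shape $wt_g(u_i)=g(u_i)+g(u_{i-1}u_i)+g(u_iu_{i+1})+g(u_iv_i)$, and analogously for $v_i$; substituting the piecewise formulas collapses each family into a small number of expressions that are linear in $i$, one for the increasing block and one for the decreasing block. I would then argue that each such expression is strictly monotone in $i$ on its block, and discharge the finitely many remaining comparisons --- increasing block against decreasing block, $u$-type against $v$-type, and outer against inner against spoke --- to conclude that the full list of vertex-weights, and separately the full list of edge-weights, contains no repetition. This gives both the VAT and the EAT property, and hence the super TAT property.

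The main obstacle I expect is bookkeeping at the seams. The junction $i=\lceil\frac{n}{2}\rceil$ where each block changes formula, the wraparound index pair $(n,1)$ that has no analogue in the ladder, and the split into the parities $n$ even and $n$ odd are precisely the places where two weights are most at risk of coinciding. Consequently the verification of distinctness will break into several cases indexed by the position of $i$ relative to $\lceil\frac{n}{2}\rceil$ and by the parity of $n$, just as in the ladder argument. No single case is deep --- once the labeling is fixed each one reduces to comparing two explicit linear functions of $i$ --- but the number of such comparisons, together with getting the edge bands to avoid collisions across the seam, is where the real effort lies.
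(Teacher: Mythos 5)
Your overall strategy coincides with the paper's: the paper also proves the theorem by exhibiting an explicit super labeling (vertices receive $1,\dots,2n$, edges receive $2n+1,\dots,5n$) built from increasing/decreasing blocks on the outer vertices, inner vertices, outer cycle edges, inner cycle edges, and spokes, and then checks that the resulting vertex-weights and edge-weights are pairwise distinct by reducing each family to a few expressions linear in $i$ and comparing them case by case (split by the position of $i$ relative to the breakpoints and by the parity of $n$). So you have correctly identified the right kind of argument.

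The gap is that your proposal never actually produces the proof: no labeling is written down, and the distinctness verification---which for a construct-and-check theorem of this sort \emph{is} the entire mathematical content---is deferred with phrases like ``I would define'' and ``I would then argue.'' This is not merely a presentational shortfall, because your one concrete structural claim, that the ladder labeling carries over ``exactly'' with indices read modulo $n$, is not what happens in the paper and is exactly where a naive transfer is at risk of failing. The paper's prism labeling is \emph{not} the ladder labeling closed up: it special-cases $i=1$ (setting $g(u_1)=1$, $g(v_1)=n+1$, and in particular $g(u_1v_1)=4n$, which breaks the spoke band out of its consecutive run), and it uses different breakpoints for different families ($\lfloor\frac{n}{2}\rfloor+1$ for vertices and spokes, $\lceil\frac{n}{2}\rceil$ for the two cycles of edges). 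These adjustments are there precisely because the wraparound edges $u_nu_1$ and $v_nv_1$ create weight collisions under the unmodified ladder scheme. Until you commit to a specific assignment and actually discharge the seam, wraparound, and parity comparisons, you have a plausible plan rather than a proof, and the plan as stated (verbatim transfer of the ladder labeling) would need repair.
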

\begin{proof}
Denote the vertices of prism graph $C_n\times P_2$ by $\{u_iv_i:1\leq i\leq n\}$ and the edges by $\{u_iu_{i+1},v_iv_{i+1}:1\leq i\leq n\}\cup \{u_iv_i:1\leq i\leq n\}$. Let $g$ be the labeling on $C_n\times P_2$. Define the labeling $g$ on the vertices as follows:
Whenever $i=1$, $g(u_1)=1$ and $g(v_1)=n+1$.
Also,
$$g(u_i)=\left\{\begin{array}{ll}
2(i-1), & 2\leq i\leq \lfloor\frac{n}{2} \rfloor +1\\ \ \\
2(n-i)+3, & \lfloor\frac{n}{2} \rfloor+2\leq i\leq n
\end{array}\right.$$
$$g(v_i)=\left\{\begin{array}{ll}
n+2(i-1), & 2\leq i\leq \lfloor\frac{n}{2} \rfloor +1\\ \ \\
3(n+1)-2i, & \lfloor\frac{n}{2} \rfloor+2\leq i\leq n
\end{array}\right.$$
From the labeling above, we have the labeling $g$ to be super.\\
Moreover, define the labeling $g$ on the edge set as follows:
$$g(u_iu_{i+1})=\left\{\begin{array}{ll}
2(n+i)-1, & 1\leq i\leq \lceil\frac{n}{2} \rceil\\ \ \\
2(2n+1-i, & \lceil\frac{n}{2} \rceil+1\leq i\leq n
\end{array}\right.$$
$$g(v_iv_{i+1})=\left\{\begin{array}{ll}
2(2n+i)-1, & 1\leq i\leq \lceil\frac{n}{2} \rceil\\ \ \\
2(3n-i+1), & \lceil\frac{n}{2} \rceil+1\leq i\leq n
\end{array}\right.$$
$$g(u_iv_i)=\left\{\begin{array}{ll}
2(2n-1)+3, & 2\leq i\leq \lfloor\frac{n}{2} \rfloor +1\\ \ \\
2(n-1+i), & \lfloor\frac{n}{2} \rfloor+2\leq i\leq n
\end{array}\right.$$
For $i=1$, we have $g(u_1v_1)=4n$.

Next, we consider the vertex-weights and show that they are pairwise distinct. The vertex-weights of $C_n\times P_2$ is as follows:
$$wt_g(u_i)=\left\{\begin{array}{ll}
W_7\\ \ \\
W_8
\end{array}\right.$$
where $$W_ 7=4(2n+i)+(t-1)$$ with the following conditions:

whenever $i=1$, $t=1$,

whenever $2\leq i\leq \lceil \frac{n}{2}\rceil$, $t=-2$.

and $$W_8=4(3n-i)+(t+5)$$ with the following conditions:

when $i=\frac{n}{2}+1, n$ even, $t=-1$,

when $i=\lceil \frac{n}{2}\rceil +1, n$ odd, $t=1$,

when $\lceil \frac{n}{2} \rceil +2\leq i\leq n$, $t=2$.

Also, $$wt_g(v_i)=\left\{\begin{array}{ll}
W_9\\ \ \\
W_{10}
\end{array}\right.$$
where $$W_9=13n+4i+(t-1)$$ with the following conditions:

$t=1$ whenever $i=1$ and $t=-2$ whenever $2\leq i\leq \lceil \frac{n}{2} \rceil$.\\
Also, $$W_{10}= 17n-4i+(t+5)$$ with the following conditions:

$t=-1$ whenever $i=\frac{n}{2}+1$ and $n$ even,

$t=1$ for $i=\lceil \frac{n}{2}\rceil +1$ and $n$ odd,

$t=2$ whenever $\lceil \frac{n}{2}\rceil +2 \leq i\leq n$.\\
In view of the above, the set $\{W_i\}_{i=7}^{10}$ are pairwise distinct which shows that the labeling $g$ is a vertex-antimagic total.\\

For the edge-weights under the labeling $g$, we have the following:\\
$$wt_g(u_iu_{i+1})=\left\{\begin{array}{ll}
W_{11}\\ \ \\
W_{12}
\end{array}\right.$$
where $$W_{11}=2(n+3i)+(t-5)$$ satisfying the following:

$t=3$ for $i=1$,

$t=2$ for $2\leq i\leq \lfloor\frac{n}{2}\rfloor$,

$t=1$ for $i=\lceil \frac{n}{2} \rceil$, $n$ odd.\\
Also, $$W_{12}=2(4(n+1)-3i)+t$$ satisfying the following conditions:

$t=-3$ whenever $i=\frac{n}{2}+1, n$ even,

$t=-2$ whenever $\lfloor \frac{n}{2}\rfloor +2\leq i\leq n$.
$$wt_g(v_iv_{i+1})=\left\{\begin{array}{ll}
W_{13}\\ \ \\
W_{14}\end{array}\right.$$
Where $$W_{13}=6(n+i)+(t-5)$$ with the following conditions:

for $i=1$, $t=3$,

for $2\leq i\leq \lfloor \frac{n}{2}\rfloor$, $t=3$,

for $i=\lceil \frac{n}{2}\rceil$ and $n$ odd, $t=1$.
Furthermore, $$W_{14}=2(3(2n-1)+4)+t$$ satisfying the followings:

$t=-3$ whenever $i=\frac{n}{2}+1$ and $n$ even,

$t=-2$ whenever $\lfloor \frac{n}{2}\rfloor +2\leq i\leq n$.\\
Moreover, $$wt_g(u_iv_i)=\left\{\begin{array}{ll}
5n+2i+t, & t=0 \ \ for\ \ i=1\ \ and\ \ t=-1 \ \ for\ \  2\leq i\leq \lfloor \frac{n}{2} \rfloor+1\\ \ \\
7n+2(2-i), & \lfloor\frac{n}{2} \rfloor+2\leq i\leq n
\end{array}\right.$$
The edge-weights of the edges in $C_n\times P_2$ under the labeling $g$ are all different which implies that the labeling is edge-antimagic total.
Therefore the labeling $g$ is TAT labeling since it is both vertex-antimagic total and edge-antimagic total. Thus the prism graphs $C_n\times P_2$ is a totally antimagic total graph.

\end{proof}
Next, we give the TAT labeling of generalised Petersen graph.

\begin{thm}
The generalised Petersen graph $P(n,m), n\geq 3$ and $1\leq m\leq \lfloor \frac{n-1}{2}\rfloor$ is a super TAT graph.
\end{thm}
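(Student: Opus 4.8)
The plan is to mirror the strategy used for ladders and prisms: exhibit a single explicit total labeling $g$ of $P(n,m)$ that places the smallest labels on the vertices, and then verify separately that it is vertex-antimagic total and edge-antimagic total. Since $P(n,m)$ has $p=2n$ vertices and $q=3n$ edges, a \emph{super} labeling must assign the labels $\{1,2,\dots,2n\}$ to $V(P(n,m))$ and the labels $\{2n+1,\dots,5n\}$ to $E(P(n,m))$. I would label the outer vertices $u_1,\dots,u_n$ and the inner vertices $v_1,\dots,v_n$ by a piecewise increasing-then-decreasing rule in the spirit of the prism labeling so that $g(V)=\{1,\dots,2n\}$, and then distribute the remaining labels over the three edge classes: the outer-cycle edges $u_iu_{i+1}$, the spokes $u_iv_i$, and the inner edges $v_iv_{i+m}$. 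A useful structural fact to exploit is that $P(n,m)$ is $3$-regular, so every vertex-weight is simply the vertex's own label plus the three labels on its incident edges; this uniformity makes the weight sums easier to track than in a graph with mixed degrees.

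For the verification I would compute the four families of weights exactly as the authors do for the prism. There are two vertex-weight families: $wt_g(u_i)$ for the outer vertices, each the sum of $g(u_i)$, the two adjacent outer-cycle edge labels, and the spoke label; and $wt_g(v_i)$ for the inner vertices, each the sum of $g(v_i)$, the spoke label, and the two inner-edge labels $g(v_{i-m}v_i)$ and $g(v_iv_{i+m})$. Likewise there are three edge-weight families, one for each edge class. As in the prism proof, each family should split into an increasing branch and a decreasing branch according to whether $i$ lies in the first or second half of the index range, with separate boundary cases for $i$ near $\lceil n/2\rceil$ and for the parity of $n$. The crux is to choose the edge labels so that the ranges of these branches interleave without collision, both within each family and across families, after which distinctness follows by comparing leading coefficients together with the small additive offsets.

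The hardest part will be controlling the inner-vertex weights, and this is precisely where the Petersen case departs from the prism case. In a prism the inner edges are $v_iv_{i+1}$, so each inner vertex meets its two nearest neighbours; in $P(n,m)$ the inner edges are $v_iv_{i+m}$, so the inner vertices decompose into $\gcd(n,m)$ cycles of length $n/\gcd(n,m)$, and the two inner edges meeting at $v_i$ are indexed by $i-m$ and $i+m$ rather than by $i\pm 1$. I would therefore expect to need a case analysis governed by the residue of the index relative to the wrap-around point $\lceil n/2\rceil$ together with the shift $m$, ensuring that the two inner-edge labels incident to each $v_i$ combine into a strictly monotone quantity on each index block so that the $wt_g(v_i)$ stay distinct. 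If a uniform formula valid for all admissible $m$ proves intractable, the fallback is to establish the claim for the extreme shifts $m=1$ and $m=\lfloor(n-1)/2\rfloor$ and then argue that the identical branch-interleaving template extends to the intermediate values of $m$ by the same comparison of leading terms.
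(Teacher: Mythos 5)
Your proposal has a genuine gap, and it sits exactly at the point you yourself identify as the crux. The theorem is an existence statement proved by explicit construction, so the construction \emph{is} the proof; your text never produces one. You describe a template (an up-down vertex rule ``in the spirit of the prism labeling,'' edge labels ``distributed'' over the three classes) and then assert that distinctness of the weights ``follows by comparing leading coefficients together with the small additive offsets.'' For the inner vertices this assertion is false as a general principle, and this is precisely where the work lies: the two inner edges at $v_i$ have indices $i-m$ and $i$ (equivalently $i$ and $i+m$) reduced modulo $n$, so for any labeling of the inner edges that is monotone in the index, the wrap-around splits the inner-vertex weights into branches with the \emph{same} leading coefficient whose constant terms differ by roughly $n$; when $n$ is even such branches can collide at indices differing by $n/2$. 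Concretely, take $P(6,2)$ with $g(u_i)=i$, $g(v_i)=6+i$, outer edges $g(u_iu_{i+1})=19-i$, spokes $g(u_1v_1)=19$ and $g(u_iv_i)=26-i$ for $i\geq 2$, inner edges $g(v_iv_{i+2})=31-i$ (this is the natural monotone choice, and in fact the one the paper uses): then $wt_g(v_2)=8+24+29+25=86$ and $wt_g(v_5)=11+21+28+26=86$, a collision. So the step you wave at cannot be discharged by slope comparison; it needs a genuinely $m$-aware construction or case split. Your fallback is also not a proof: verifying $m=1$ and $m=\lfloor(n-1)/2\rfloor$ and claiming the ``template extends'' to intermediate $m$ has no logical force, since antimagic collisions are arithmetic coincidences depending on the exact value of $m$, not on it monotonically.

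For comparison, the paper does not reuse the prism's up-down template at all. It labels the outer vertices increasingly ($g(u_i)=i$), the inner vertices with the next $n$ integers increasing in $i$, and each of the three edge classes \emph{decreasingly} in $i$ (outer edges $3n-(i-1)$, spokes essentially $4n-(i-2)$, inner edges $5n-(i-1)$), so that every weight family is linear in $i$ with slope $-2$ and the parameter $m$ enters only through the wrap-around correction on inner edges. That is a structurally different (and simpler) design than yours, but you should be aware that the paper's own verification of the inner-vertex weights is unreliable as printed: its formulas involve undefined parameters $k$, $j$, $t$, and under the only sensible reading of its vertex labels the construction produces exactly the $P(6,2)$ collision computed above. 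The honest conclusion is that the $m$-dependent wrap-around in the inner weights is the entire difficulty of this theorem, and neither your outline nor, arguably, the paper's argument fully resolves it; at minimum, your proposal as written does not contain a proof.
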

\begin{proof}
Denotes the vertices of the graph $P(n,m)$ by the symbols $u_iv_i, 1\leq i\leq n$. Let $g$ be a labeling on the graph $P(n,m)$ defined in the following way:
$$g(u_i)=i, 1\leq i\leq n$$
$$g(v_i)=\left\{\begin{array}{ll}
n+1, & 1\leq i\leq n-1\\ \ \\
2(3n-i+1), & j= n
\end{array}\right.$$
Also, define labeling $g$ on the edges as follows:
$$g(u_iu_{i+1})=3n-(i-1), 1\leq i\leq n$$
where $i$ is calculated modulo $n$.
$$g(u_iv_i)=\left\{\begin{array}{ll}
3n+1, & i=1\\ \ \\
4n-(i-2), & 2\leq i\leq n
\end{array}\right.$$
$$g(v_iv_{i+m})=5n-(i-1), 1\leq i\leq n$$
with indices $i+m$ taken modulo $n$.\\
It is easy to see that the labeling $g$ is super. Also, the vertex-weights under the labeling $g$ is as follows:
$$wt_g(u_i)=\left\{\begin{array}{ll}
10n+2-i, & i=1\\ \ \\
5(2n+1)-2i, & 2\leq i\leq n
\end{array}\right.$$
$$g(v_iv_{i+1})=\left\{\begin{array}{lll}
14n+2-m, & i=1\\ \ \\
15n-2i+5-t, &t=m+1 \ \ for \ \ 2\leq  i\leq \lfloor\frac{n}{2} \rfloor \ \\ 
 & and \ \ t=-m\ \ for \ \ \lfloor \frac{n}{2}\rfloor+1\leq i\leq n-1\\ \ \\
12n+m+5, & i=n
\end{array}\right.$$
In view of the above labeling, the vertex-weights are pairwise distinct which implies that the labeling $g$ is super vertex-antimagic total.\\
Next we consider the edge-weights of the graph $P(n,m)$ as follows:
$$wt_g(u_iu_{i+1})=\left\{\begin{array}{ll}
3n+i+2, & 1\leq i\leq n-1\\ \ \\
3n+2, & i=n
\end{array}\right.$$
$$wt_g(v_iv_{i+m})=7n+3-t$$ with the following conditions:

$t=-(k+j)$ whenever $1\leq i\leq \lfloor \frac{n}{2}\rfloor$,

$t=k-j+1$ whenever $\lfloor \frac{n}{2}\rfloor +1\leq i\leq n-1$,

$t=k+1$ whenever $i=n$.
Also, $$wt_g(u_iv_i)=\left\{\begin{array}{lll}
4(n+1), & i=1\\ \ \\
5n+3+i, & 2\leq i\leq n-1\\ \ \\
5n+3, & i=n
\end{array}\right.$$
This means that all the edges in $P(n,m)$ have different edge-weights which implies that the graph $P(n,m)$ is an edge-antimagic total graph. Therefore the generalised Petersen graph $P(n,m)$ is a super totally antimagic total graph since it admits both vertex-antimagic total labeling and edge-antimagic total labeling.
\end{proof}
\section{Totally antimagic total labeling of chain graphs}
In this section, we prove that the chain graphs of totally antimagic total graphs is totally antimagic total graph. Suppose now that the graphs $B_1,B_2,...,B_m$ are blocks and that for any $i\in \{1,2,...,m\}$, $B_i$ and $B_{i+1}$ have a vertex in common in such a way that the block-cut point is a path. The graph $G$ obtained by the concatenation will be called a \emph{chain} graph.
\begin{thm}\label{RT}
The chain graph $G$ obtained by concatenation of totally antimagic total graphs is a TAT graph.
\end{thm}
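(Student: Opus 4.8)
The plan is to prove the statement by induction on the number $m$ of blocks, reducing everything to a single two-block concatenation, and to carry out that base step by \emph{scaling} the block labelings rather than additively shifting them.

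Before the induction I would record that multiplying every label of a total labeling $f$ (vertices and edges alike) by a fixed positive integer $\alpha$ multiplies every edge-weight and every vertex-weight by $\alpha$, and hence preserves both antimagic conditions. Scaling, rather than the additive edge-shift that works for unions of \emph{regular} TAT graphs, is the decisive tool here: since the blocks $B_i$ are arbitrary TAT graphs, adding a constant to all edge labels would change each vertex-weight by that constant times the vertex degree, which for non-regular blocks can merge two previously distinct vertex-weights.

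The heart of the argument is the following two-block lemma: if $A$ and $B$ are TAT graphs and $G$ is formed by identifying $a\in A$ with $b\in B$ into one cut vertex $c$, then $G$ is TAT. Fix TAT labelings $g_A,g_B$ and pick an integer $\alpha$ larger than every label and every weight occurring in $g_A$ and larger than the sum of all labels of $g_A$. Define $g$ on $G$ by $g=g_A$ on every element of $A$ except $c$, by $g=\alpha g_B$ on every element of $B$, and by $g(c)=\alpha g_B(b)$. Then the labels of $A$ all lie below $\alpha$ while the $B$-labels and $g(c)$ are multiples of $\alpha$, so $g$ is injective; moreover $g(c)$ is exactly the value the scaled labeling of $B$ expects at $b$, so every $B$-vertex and every $B$-edge keeps the pristine weight $\alpha\cdot wt_{g_B}$, which stays distinct. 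Likewise every vertex and edge of $A$ not meeting $c$ retains its original $g_A$-weight, a value below $\alpha$ (here one uses that changing only the label of $c$ affects neither the incident edge labels nor the weights of its neighbours).

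The only elements whose weights are genuinely new are $c$ itself and the $A$-edges at $c$, and this cut vertex is exactly where I expect the difficulty to lie. A direct computation gives $wt_g(c)=\alpha\,wt_{g_B}(b)+\delta$ with $0<\delta<\alpha$ and, for each $A$-edge $aw$, $wt_g(aw)=\alpha g_B(b)+\bigl(wt_{g_A}(aw)-g_A(a)\bigr)$ with the parenthesised term positive and below $\alpha$. Each such weight therefore lies strictly between two consecutive multiples of $\alpha$, so it can equal neither a $B$-weight (a multiple of $\alpha$) nor a surviving $A$-weight (which lies below $\alpha$); distinctness among the $A$-edges at $c$ follows from the edge-antimagicness of $g_A$ at $a$. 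Hence $g$ is a TAT labeling of $G$, proving the lemma. Finally, writing $G=G'\cup_c B_m$, where $G'$ is the chain of the first $m-1$ blocks, the graph $G'$ is again a chain whose block-cut-point graph is a path and is TAT by the inductive hypothesis, so the lemma applies and the theorem follows.
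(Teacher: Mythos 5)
Your construction does not produce a total labeling in the sense this theorem requires, and that is a genuine gap rather than a fixable detail. A total labeling here (as in the Ba\v{c}a et al.\ paper this result extends, and as is implicit in the present paper's definition of \emph{super} and in its own proof, which takes $g_i:V(G_i)\cup E(G_i)\rightarrow \{1,2,\dots,|V(G_i)|+|E(G_i)|\}$) is a bijection onto the initial segment $\{1,2,\dots,|V(G)|+|E(G)|\}$. Your labeling $g$ assigns to every element of $B$ a value $\alpha g_B(\cdot)$, a scattered set of multiples of a huge $\alpha$, so the label set of $G$ is nowhere near $\{1,\dots,|V(G)|+|E(G)|\}$. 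If arbitrary injections into the positive integers were allowed, the theorem --- and indeed the whole subject --- would be trivial: label the $i$-th element of \emph{any} graph with $3^i$ and all edge-weights and vertex-weights are automatically distinct (uniqueness of ternary representations), so every graph would be TAT with no hypothesis on the blocks at all. The fact that your argument never uses that the block labelings $g_A,g_B$ are bijections onto initial segments is a symptom of this: what you have proved is a statement about a weaker notion of labeling under which there is nothing to prove.

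Nor can the gap be patched by compressing your labels back down to $\{1,\dots,|V(G)|+|E(G)|\}$ at the end: that compression is merely order-preserving, and order-preserving maps do not commute with the sums that define weights, so all of your separations of weights into bands strictly between consecutive multiples of $\alpha$ are destroyed. This is exactly why the paper's proof takes the route you deliberately avoided: it shifts the labels of each block $G_i$ \emph{additively} by $\sum_{j<i}\bigl(|V(G_j)|+|E(G_j)|\bigr)$ (corrected for the identified cut vertices), which keeps the union of the label sets equal to the required initial segment, and it then has to confront precisely the difficulty you sidestepped --- an additive shift of edge labels moves each vertex-weight by a degree-dependent amount --- by analysing separately the weights of the cut vertices and of the edges incident with them. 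Your induction on the number of blocks is a perfectly good skeleton, and your bookkeeping at the cut vertex would be sound if scaling were legitimate; but the scaling step itself is the fatal move.
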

\begin{proof}
Let $G$ denotes that chain graph with blocks $G_i, 1\leq i\leq m$. Let $g_i,1\leq i\leq m$ be a TAT labeling of $G_i$. Also, let $p=|V(G_i)|$ and $$g_i:V(G_i)\cup E(G_i)\rightarrow \{1,2,...,|V(G_i)|+|E(G_i)|\}$$ such that $wt_{g_i}(v)\neq wt_{g_i}(u)$ for all $u,v\in V(G_i),u\neq v$ and $wt_{g_i}(e)\neq wt_{g_i}(h)$ for all $e,h\in E(G_i), e\neq h$.\\
Consider the cut-vertex between $G_i$ and $G_{i+1}$ as the concatenation of the vertex labeled with $1\in V(G_i)$ and vertex labeled with $p\in V(G_{i+1})$ denoted by $r_i$. Without loss of generality we may assume that $r_{i+1}>r_i$. Define a labeling for $G$ such that
$$f(x)=\left\{\begin{array}{ll}
g_1(x), & x\in V(G_1)\\ \ \\
g_i(x)+\disp{\sum_{j=1}^{i-1}|V(G_j)|+\sum_{j=1}^{i-1}|E(G_j)|-(i-2)}, & x\in V(G_i),i=1,2,...,m
\end{array}\right.$$
and $$f(e)=\left\{\begin{array}{ll}
g_1(e), & e\in E(G_1)\\ \ \\
g_i(e)+\disp {\sum_{j=1}^{i=1}|V(G_j)|+\sum_{j=1}^{i-1}|E(G_j)|-(i-1)}, & e\in E(G_i),i=1,2,...,m
\end{array}\right.$$
It is easy to see that $f$ is a total antimagic total labeling of $G$. For the edge-weights under the labeling $f$, we obtain
$$wt_f(e)=\left\{\begin{array}{ll}
wt_{g_1}(e), & e\in E(G_1)\\ \ \\
wt_{g_i}(e)+3(\disp {\sum_{j=1}^{i-1}|V(G_j)|+\sum_{j=1}^{i-1}|E(G_j)|})-1, & e\in E(G_i),i=1,2,...,m
\end{array}\right.$$
Also, the edge-weights for the edges in $G_{i+1}$ incidents with the cut-vertex $r_i$, under the labeling $f$ is as follows:
$$wt_f(e)=\left\{\begin{array}{ll}
wt_{g_1}(e), & e\in E(G_1)\\ \ \\
wt{g_i}(e)+2(\disp {\sum_{j=1}^{i-1}|V(G_j)|+\sum_{j=1}^{i-1}|E(G_j)|})-p+r_i-r_1, & e\in E(G_i),i=1,2,...,m
\end{array}\right.$$
As $g_i,i=1,2,...,m$ is edge-antimagic labeling, the edge-weights of all edges in $G$ under the labeling $f$ are pairwise distinct.\\
The maximum edge-weight of an edge $e\in E(G_i),1\leq i\leq m$ is
$$wt_f^{max}(e)\leq 3(\disp {\sum_{j=1}^{i}|V(G_j)|+\sum_{j=1}^{i-1}|E(G_j)|})-|V(G_1)|+|E(G_1)|, e\in E(G_i),i=1,2,...,m.$$
Thus $f$ is an edge-antimagic labeling of $G$.\\
For the vertex-weight under labeling $f$, we get
$$wt_f(v)=\left\{\begin{array}{ll}
wt_{g_1}(v), & v\in V(G_1)\\ \ \\
wt{g_i}(v)+(deg(v)_+1)(\disp {\sum_{j=1}^{i-1}|V(G_j)|+\sum_{j=1}^{i-1}|E(G_j)|}-1)+1, & e\in E(G_i),i=1,2,...,m
\end{array}\right.$$
For the cut-vertices $r_i$, the weights under labeling $f$, for cycle-like structure, we get
$$wt_f(r_i)=wt_{g_1}(v_p)+wt_{g_1}(v_1)+deg(v)(\disp {\sum_{j=1}^{i-1}|V(G_j)|+|E(G_j)|})+$$
$$(deg(v)+1)(\disp{\sum_{j=1}^{i-2}|V(G_j)+E(G_j)|})-deg(r_i)-p.$$
where $v_p$ and $v_1$ are the vertices in $G_1$ labeled with $p$ and $1$ respectively.\\
Also, the cut-vertices weights under labeling $f$ for path-like structure is as follows:
$$wt_f(r_i)=wt_{g_1}(v_p)+wt_{g_1}(v_1)+(deg(v)+1)(2(\sum_{j=1}^{i-1}|V(G_j)|+|E(G_j)|)$$$$+|V(G_{j-1})|+|E(G_{j-1})|)-deg(r_i)-p$$
In view of the above labeling, the chain graph $G$ is a TAT graph.
\end{proof}
\begin{cor}
The tree graph formed from the concatenation of paths is a TAT graph.
\end{cor}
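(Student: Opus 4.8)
The plan is to obtain this as an immediate consequence of Theorem~\ref{RT}. First I would recall that, by the result of Baca \emph{et al.} in \cite{BE1} quoted in the introduction, every path $P_n$ is a totally antimagic total graph. Hence each path appearing in the construction already carries a TAT labeling, so it can serve as one of the blocks $G_i$ in the concatenation scheme of the preceding theorem.

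Next I would argue that a tree formed by concatenating paths is exactly a chain graph in the sense of Theorem~\ref{RT}: the constituent paths play the role of the blocks $B_i$, any two consecutive paths share a single cut-vertex, and the block-cut-point structure is itself a path. Since gluing trees along single vertices creates no cycle, the resulting graph is connected and acyclic, i.e.\ a tree; conversely the given tree is presented precisely as such a concatenation. Thus the hypotheses of Theorem~\ref{RT} are satisfied with every $G_i$ a path.

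With this identification in place I would simply invoke Theorem~\ref{RT}. The labeling $f$ constructed there, obtained by shifting the individual TAT labelings $g_i$ of the blocks, is simultaneously vertex-antimagic total and edge-antimagic total on $G$. Because the concatenation of paths yields a \emph{path-like} (acyclic) structure rather than a cycle-like one, the shared vertices are governed by the path-like formula for $wt_f(r_i)$ recorded at the end of that proof, which keeps the cut-vertex weights distinct from each other and from the remaining vertex-weights.

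The one point demanding care is the verification that paths genuinely fit the \emph{block} framework of Theorem~\ref{RT}: unlike a $2$-connected block, a path is attached to its neighbour at an endpoint, so I would check that the endpoints labeled $1$ and $p$ in adjacent paths are the vertices identified as the cut-vertices $r_i$, and that the ordering convention $r_{i+1}>r_i$ can be arranged by relabeling. I expect this bookkeeping, rather than any new estimate, to be the main (and only) obstacle; once it is confirmed, the conclusion that the tree is a TAT graph follows from Theorem~\ref{RT} with no further computation.
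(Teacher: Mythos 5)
Your proposal is correct and takes exactly the same route as the paper: the paper's own proof is the single line that the corollary ``follows directly from Theorem~\ref{RT}.'' Your additional bookkeeping --- that paths are TAT by \cite{BE1}, that a concatenation of paths satisfies the chain-graph definition, and that the endpoint/cut-vertex conventions can be arranged --- is exactly the verification the paper leaves implicit, so nothing further is needed.
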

\begin{proof}
This follows from directly from theorem \ref{RT}.
\end{proof}
\emph{Conjecture}: All trees are TAT graph.

\end{document}